\documentclass[a4paper,11pt]{amsart}

\usepackage{amsmath}
\usepackage{amsthm}
\usepackage{amssymb}
\usepackage{mathptm}

\theoremstyle{plain}
\newtheorem{thm}{Theorem}[section]

\newtheorem{lem}[thm]{Lemma}

\newtheorem{prop}[thm]{Proposition}
\theoremstyle{definition}
\newtheorem{defn}[thm]{Definition}

\theoremstyle{remark}

\title{$1$-primitive near-rings}

\author{Gerhard Wendt}
\address{Institut f\"ur Algebra, Johannes Kepler Universit\"at Linz}
\email{Gerhard.Wendt@jku.at}

\urladdr{http://www.algebra.uni-linz.ac.at}
\subjclass[2000]{16Y30}
\keywords{sandwich near-rings, centralizer near-rings, primitivity}
\thanks{This work has been supported by grant P23689-N18 of the Austrian National Science Fund (FWF)}
\begin{document}
\setlength{\parindent}{0pt}
\maketitle
\begin{abstract} We combine the concept of sandwich near-rings with that of centralizer near-rings to get a classification of zero symmetric $1$-primitive near-rings as dense subnear-rings of centralizer near-rings with sandwich multiplication. This result generalizes the well known density theorem for zero symmetric $2$-primitive near-rings with identity to the much bigger class of zero symmetric $1$-primitive near-rings not necessarily having an identity.
\end{abstract}
\section{Introduction}
We consider right near-rings, this means the right distributive law holds, but not necessarily the left distributive law. The notation is that of \cite{Pilz1}. Primitive near-rings play the same role in the structure theory of near-rings as primitive rings do in ring theory. When a primitive near-ring happens to be a ring, then it is a primitive ring in the usual sense. However, in near-ring theory there exist several types of primitivity depending on the type of simplicity of a near-ring group. A complete description of so called $2$-primitive near-rings with identity is available. Such near-rings are dense subnear-rings of special types of centralizer near-rings (see \cite{Pilz1} for a thorough discussion). If one studies primitive near-rings without necessarily having an identity, then still results are available but much more technical in detail in comparison to the $2$-primitive case with identity. Combination of the concepts of centralizer near-rings and sandwich near-rings were used in \cite{Wendt3} and in \cite{FP} to describe $1$-primitive near-rings which do not necessarily have an identity. In this paper we will extend and simplify the results obtained in \cite{Wendt3} and \cite{FP} and we will also consider $2$-primitive near-rings as a special case of $1$-primitive near-rings.

We will now briefly give the definitions for zero symmetric primitive near-rings to settle our notation which we will keep throughout the paper.

Let $N$ be a zero symmetric near-ring. Let $\Gamma$ be an $N$-group of the near-ring $N$. An $N$-ideal $I$ of $\Gamma$ is a normal subgroup of the group $(\Gamma, +)$ such that $\forall n \in N \forall \gamma \in \Gamma \forall i \in I: n(\gamma + i)-n\gamma \in I$. When $N$ is itself considered as an $N$-group, then an $N$-ideal of $N$ is just a left ideal of the near-ring. An $N$-group $\Gamma$ of the near-ring $N$ is of type $0$ if $\Gamma \neq \{0\}$,  if there are no non-trivial $N$-ideals in $\Gamma$, so $\Gamma$ is a simple $N$-group, and if there is an element $\gamma \in \Gamma$ such that $N\gamma =\Gamma$. Such an element $\gamma$ will be called a generator of the $N$-group $\Gamma$. The $N$-group $\Gamma$ is of type $1$ if it is of type $0$ and moreover we have $N\gamma=\Gamma$ or $N\gamma=\{0\}$ for any $\gamma \in \Gamma$. Let $K$ be a subgroup of the $N$-group $\Gamma$. $K$ is called an $N$-subgroup of $\Gamma$ if $NK\subseteq K$. The $N$-group $\Gamma$ is called $N$-group of type $2$ if $N\Gamma \neq \{0\}$ and there are no non-trivial $N$-subgroups in $\Gamma$. It is easy to see that an $N$-group of type $2$ is also of type $1$. In case $N$ has an identity element, an $N$-group of type $1$ is also of type $2$ (see \cite{Pilz1}, Proposition 3.4 and Proposition 3.7).

Given an $N$-group $\Gamma$ and a subset $\triangle \subseteq \Gamma$ then $(0:\triangle)=\{n \in N | \forall \gamma \in \triangle : n\gamma =0\}$ will be called the annihilator of $\triangle$. $\Gamma$ will be called faithful if  $(0:\Gamma)=\{0\}$. 

A near-ring $N$ is called $1$-primitive if it acts on a faithful $N$-group $\Gamma$ of type $1$. In such a situation we will say that the near-ring acts $1$-primitively on the $N$-group $\Gamma$.

It is a well known fact that any zero symmetric and $1$-primitive near-ring with identity which is not a ring is dense (i.e. equal to, in the finite case) in a centralizer near-ring $M_S(\Gamma):=\{f: \Gamma \rightarrow \Gamma | \forall \gamma \in \Gamma\ \forall s \in S: s(f(\gamma)) = f(s(\gamma))\ \text{and}\ f(0)=0\}$, where $(\Gamma, +)$ is a group, $0$ denoting its neutral element w.r.t. $+$, $S$ is a fixedpointfree automorphism group of $\Gamma$ and the near-ring operations are the pointwise addition of functions and function composition (see \cite{Pilz1}, Theorem $4.52$ for a detailed discussion).

In case a $1$-primitive near-ring contains no identity element, the situation gets more complicated and one can use sandwich near-rings to get a classification of $1$-primitive near-rings as certain kind of centralizer near-rings. We introduce the concept of a sandwich near-ring in the next definition which we will need in the next sections. The operation symbol $\circ$ stands for composition of functions.

\begin{defn}\label{Definition1}
Let $(\Gamma,+)$ be a group, $X \subseteq \Gamma$ a subset of $\Gamma$ containing the zero $0$ of $(\Gamma,+)$  and $\phi:\Gamma \longrightarrow X$ a map such that $\phi(0)=0$. Define the following operation $\circ'$ on $\Gamma^X$ ($\Gamma^X$ denoting the set of all functions mapping from $X$ to $\Gamma$): $f\circ' g := f \circ \phi \circ g$ for $f, g \in \Gamma^X$. Then $(\Gamma^X,+,\circ')$ is a zero symmetric near-ring. Let  $M_0(X,\Gamma,\phi):=\{f: X \rightarrow \Gamma | f(0)=0\}$. With respect to the operations $+$ and $\circ'$, $M_0(X,\Gamma,\phi)$ is a zero symmetric subnear-ring of $(\Gamma^X,+,\circ')$. We call  $M_0(X,\Gamma,\phi)$ a sandwich near-ring and $\phi$ is called the sandwich function.
\end{defn}
We give two examples of such sandwich near-rings in the following. They will also serve as examples explaining the concepts of primitivity by doing concrete calculations. Let $N:=\{f \in M_0(\Bbb{Z}_4)|f(2)=f(3)=0\}$. With respect to pointwise addition of functions and function composition $N$ is a zero symmetric near-ring which acts faithfully on the $N$-group $\Bbb{Z}_4$. For $\gamma \in \{0,2,3\}$ we have $N\gamma =\{0\}$ and if $\gamma = 1$ we have  $N\gamma=\Bbb{Z}_4$. $\{0,2\}$ is not an $N$-ideal of the $N$-group. To see this, let $f \in N$ be such that $f(1)=3$. Then, $f(1+2)-f(1)=1 \not \in \{0,2\}$. So, $N$ acts $1$-primitively on $\Bbb{Z}_4$. Clearly, $\{0,2\}$ is an $N$-subgroup of the $N$-group $\Bbb{Z}_4$. Thus, $N$ does not act $2$-primitively on $\Bbb{Z}_4$. Note that $N$ has $4$ elements and cannot be isomorphic to a centralizer near-ring since it is missing an identity element. Now let $X:=\{0,1\}$ and $\phi: \Bbb{Z}_4 \rightarrow X$ such that $\phi(0)=\phi(2)=\phi(3)=0$ and $\phi(1)=1$. Then, $M_0(X,\Bbb{Z}_4,\phi)$ is a sandwich near-ring. For $f \in M_0(X,\Bbb{Z}_4,\phi)$ we have $f(0)=0$ and $f(1) \in \{0,1,2,3\}$. Thus, $M_0(X,\Bbb{Z}_4,\phi)$ has $4$ elements. Let $g_1, g_2 \in M_0(X,\Bbb{Z}_4,\phi)$ such that $g_1(1)=1$ and $g_2(1)=2$. Then multiplication $\circ'$ is done as $g_1 \circ' g_2=g_1 \circ \phi \circ g_2$. So,  $g_1 \circ' g_2 (1)=g_1(\phi(g_2(1)))=g_1(\phi(2))=g_1(0)=0$. Thus $g_1 \circ' g_2$ is the zero function. Let $f \in N$ and let $\psi_f : X \rightarrow \Bbb{Z}_4, x \mapsto f(x)$. The function $h: N \rightarrow M_0(X,\Bbb{Z}_4,\phi), f \mapsto \psi_f$ is easily seen to be a near-ring isomorphism.

Similary, if we let $N_1:=\{f \in M_0(\Bbb{Z}_4)|f(3)=0\}$ then one sees that $N_1$ is $2$-primitive on $\Bbb{Z}_4$ because $\{0,2\}$, the only non-trivial subgroup of $\Bbb{Z}_4$, is not an $N_1$-subgroup. $N_1$ has $16$ elements. Let $X=\{0,1,2\}$ and $\phi: \Bbb{Z}_4 \rightarrow X$ such that $\phi(0)=\phi(3)=0$, $\phi(1)=1$ and $\phi(2)=2$. Then, $M_0(X,\Bbb{Z}_4,\phi)$ is a sandwich near-ring. For $f \in M_0(X,\Bbb{Z}_4,\phi)$ we have $f(0)=0$, $f(1) \in \{0,1,2,3\}$ and $f(2) \in \{0,1,2,3\}$. Thus, $M_0(X,\Bbb{Z}_4,\phi)$ has $16$ elements. Let $f \in N_1$ and let $\psi_f : X \rightarrow \Bbb{Z}_4, x \mapsto f(x)$. As in the example before the function $h: N \rightarrow M_0(X,\Bbb{Z}_4,\phi), f \mapsto \psi_f$ is easily seen to be a near-ring isomorphism. 

Combinations of the concepts of centralizer near-rings and sandwich near-rings were used in \cite{Wendt3} and in \cite{FP} to describe $1$-primitive near-rings which do not necessarily have an identity. In \cite{Wendt3}, $1$-primitive near-rings are described as dense subnear-rings of near-rings of the type of $M_0(X,\Gamma,\phi,S):=\{f:X\longrightarrow \Gamma \mid f(0)=0\  \text{and}\
\forall s \in S\, \forall x \in X: f(s(x))=s(f(x))\}$, see Definition \ref{51D04}, where $S$ is an automorphism group of $\Gamma$ acting without fixed points on the set $X \setminus \{0\}$ and the near-ring operations are pointwise $+$ of functions and sandwich multiplication,  but the result and proof in \cite{Wendt3} requires that the primitive near-ring has a multiplicative right identity. We will see in this paper that this restriction is not needed. Thus we can generalize the result of \cite{Wendt3} to near-rings not necessarily having a multiplicative right identity and obtain a description of all zero symmetric $1$-primitive near-rings as well as $2$-primitive near-rings as dense subnear-rings of sandwich centralizer near-rings. This construction simplifies the construction of \cite{FP}. 

In \cite{FP} also no multiplicative right identity is required and the $1$-primitive near-ring is described as dense subnear-ring of a sandwich near-ring $M(X,N,\phi,\psi,B,C):=\{f:X\longrightarrow N \mid \forall s \in S\, \forall x \in X: f(s(x))=\psi(s)(f(x))\}$, where $X$ is a non-empty set, $(N,+)$ a group, $\phi$ the sandwich function, $B$ a subgroup of $\mathrm{Aut}(N,+)$, $S$ a group of permutations on $X$ and $\psi \in$ Hom($S,B$). This construction is more technical than that in \cite{Wendt3} and that we will use in our approach in this paper and the functions in $M(X,N,\phi,\psi,B,C)$ are not centralized by elements of $S$. For the details of the construction we refer the interested reader to \cite{FP}.

The idea of combining the concepts of centralizer near-rings and sandwich near-rings used in this paper allows us to explicitely describe and construct the sandwich function $\phi$ which determines the multiplication in the primitive near-ring. This will be done in the last section of this paper and will give us the possibility to construct zero symmetric $1$-primitive near-rings without necessarily having an identity systematically. Examples to demonstrate this construction are included.
\section{Sandwich centralizer near-rings}
The following definition introduces certain types of sandwich near-rings which were used by the author in \cite{Wendt2} and \cite{Wendt3} to describe near-rings with a multiplicative right identity.
\begin{defn}\label{51D04}
Let $(\Gamma,+)$ be a group, $X \subseteq \Gamma$ a subset of $\Gamma$ containing the zero $0$ of $(\Gamma,+)$  and $\phi:\Gamma \longrightarrow X$ a map such that $\phi(0)=0$. Let $S \subseteq \mathrm{End}(\Gamma,+)$, $S$ not empty, be such that $\forall s \in S, \forall \gamma \in \Gamma:\ \phi(s(\gamma)) = s(\phi(\gamma))$ and such that $S (X) \subseteq X$. Then $M_0(X,\Gamma, \phi,S):=\{f:X\longrightarrow \Gamma \mid f(0)=0\  \text{and}\ \forall s \in S, x \in X: f(s(x))=s(f(x))\}$ is a zero symmetric subnear-ring of $M_0(X,\Gamma, \phi)$ as defined in Definition \ref{Definition1}, which we call a sandwich centralizer near-ring.
\end{defn}

 It is straightforward to see that $M_0(X,\Gamma, \phi, S)$ is indeed a zero symmetric subnear-ring of $M_0(X,\Gamma, \phi)$ where the zero of $M_0(X,\Gamma,\phi, S)$ is the zero function $\overline 0$ on $X$. Since $S(X) \subseteq X$, $M_0(X,\Gamma, \phi, S)$ is not empty, since $\overline 0 $ is contained in $M_0(X,\Gamma, \phi, S)$.   

 Note that the function $id: X \rightarrow \Gamma, x \mapsto x$ is contained in $M_0(X, \Gamma, \phi, S)$ and serves as a multiplicative right identity of the near-ring. As shown in \cite{Wendt2}, any zero symmetric near-ring with a multiplicative right identity is isomorphic to a sandwich centralizer near-ring $M_0(X,\Gamma, \phi,S)$ with suitable $X, \Gamma, \phi, S$.
\section{The equivalence relation $\sim$}
Given a $1$-primitive near-ring and an $N$-group $\Gamma$ of type $1$ we now introduce an equivalence relation $\sim$ on $\Gamma$. What we need for the proof of our theorems in the next section is a special type of system of representatives w.r.t. $\sim$, being invariant under the $N$-automorphisms of $\Gamma$. The existence of such a representative system will be guaranteed in Lemma \ref{Lemma1}. 
\begin{defn}\label{Def1} Let $N$ be a near-ring and let $\Gamma$ be an $N$-group. Let $\gamma_1, \gamma_2 \in \Gamma$. Define $\gamma_1 \sim \gamma_2$ iff $\forall n \in N: n\gamma_1 = n\gamma_2$.
\end{defn}
It is easy to see that $\sim$ is an equivalence relation. To introduce another notation, we mention that when we have a function $f$ with domain $D$ and $M \subseteq D$, then $f_{|»M}$ means the restriction of the function to the set $M$.

In the proof of Lemma \ref{Lemma1} we will use that given an $N$-automorphism $s$ of an $N$-group $\Gamma$, then also the inverse function $s^{-1}$ is an $N$-automorphism of the $N$-group $\Gamma$. This is straightforward to see as is shown in the next proposition.
\begin{prop}\label{DCCN} Let $N$ be a zero symmetric near-ring and $\Gamma$ be an $N$-group. Let $s \in \mathrm{Aut}_N(\Gamma,+)$. Then also the inverse function $s^{-1} \in \mathrm{Aut}_N(\Gamma)$.
\end{prop}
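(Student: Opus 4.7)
The plan is to observe that the statement has two components: $s^{-1}$ must be a group automorphism of $(\Gamma,+)$, and $s^{-1}$ must commute with the action of $N$ on $\Gamma$. The first component is a standard fact from group theory --- the set-theoretic inverse of a bijective group homomorphism is itself a group homomorphism, and it is clearly bijective --- so I would dispatch it in a single sentence, probably just citing that $s$ being an automorphism of $(\Gamma,+)$ immediately yields $s^{-1} \in \mathrm{Aut}(\Gamma,+)$. Note that zero symmetry of $N$ plays no role in this part.

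For the second component, the idea is to use the $N$-linearity of $s$ itself and transport it across the inverse. Fix $n \in N$ and $\gamma \in \Gamma$. Since $s$ is a bijection on $\Gamma$, there exists a unique $\delta \in \Gamma$ with $s(\delta) = \gamma$, namely $\delta = s^{-1}(\gamma)$. Then
\[
  n\gamma \;=\; n\,s(\delta) \;=\; s(n\delta),
\]
where the second equality uses $s \in \mathrm{Aut}_N(\Gamma,+)$. Applying $s^{-1}$ to both sides gives $s^{-1}(n\gamma) = n\delta = n\,s^{-1}(\gamma)$, which is exactly the required $N$-equivariance of $s^{-1}$.

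There is no real obstacle here; the result is a formal consequence of bijectivity plus the defining equation of an $N$-homomorphism. The only thing worth being careful about is the direction in which the identity $s(n\delta) = n\,s(\delta)$ is applied: one must start from a generic $\gamma$, rewrite it as $s$ of something, and then use the $N$-equivariance of $s$ before undoing $s$. I would present the proof in two short sentences and skip the verification of bijectivity of $s^{-1}$ as a group map.
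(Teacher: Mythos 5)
Your proposal is correct and is essentially identical to the paper's own proof: both write $n\gamma = n(s(s^{-1}(\gamma))) = s(n(s^{-1}(\gamma)))$ and apply $s^{-1}$ to conclude, after dismissing $s^{-1}\in\mathrm{Aut}(\Gamma,+)$ as standard. No differences worth noting.
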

\begin{proof} Let $s \in \mathrm{Aut}_N(\Gamma)$. Clearly, $s^{-1} \in \mathrm{Aut}(\Gamma,+)$. Let $n \in N$ and $\gamma \in \Gamma$. Then, $n\gamma=n(s(s^{-1}(\gamma)))=s(n(s^{-1}(\gamma)))$. Thus, $s^{-1}(n\gamma)=n(s^{-1}(\gamma))$. So we see that also $s^{-1}$ is an $N$-automorphism.
\end{proof}

\begin{lem}\label{Lemma1} Let $N$ be a zero symmetric near-ring and $\Gamma$ be an $N$-group of type $1$. Let $S:=\mathrm{Aut}_N(\Gamma,+)$. Then there is a set of representatives $X$ of the equivalence relation $\sim$ such that $S(X)\subseteq X$ and $0 \in X$.
\end{lem}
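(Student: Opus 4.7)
The plan is to exploit the natural action of $S$ on the set $\Gamma/{\sim}$ of equivalence classes and to observe that this action is free away from the class of $0$, so that representatives in distinct classes of the same $S$-orbit can be chosen compatibly.

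First I will check that $S$ indeed permutes the equivalence classes: if $\gamma_1\sim\gamma_2$ and $s\in S$, then for every $n\in N$,
\[
n\,s(\gamma_1)=s(n\gamma_1)=s(n\gamma_2)=n\,s(\gamma_2),
\]
so $s(\gamma_1)\sim s(\gamma_2)$. Next, since $N$ is zero symmetric, $\gamma\sim 0$ iff $N\gamma=\{0\}$, and because $\Gamma$ is of type $1$ every other class consists entirely of generators (elements with $N\gamma=\Gamma$). As $s(0)=0$ for every $s\in S$, the class $[0]$ is fixed by the $S$-action, and I will include $0$ in $X$ as its representative.

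The key structural step is to show that $S$ acts freely on $(\Gamma/{\sim})\setminus\{[0]\}$. Suppose $s\in S$ satisfies $s([\gamma])=[\gamma]$ for some generator $\gamma$. Then $s(\gamma)\sim\gamma$, so $n\,s(\gamma)=n\gamma$ for every $n\in N$; since $s$ is an $N$-automorphism this reads $s(n\gamma)=n\gamma$, and because $N\gamma=\Gamma$ this forces $s$ to be the identity on $\Gamma$. Consequently, for every non-trivial class $C$ in the $S$-orbit of a class $C_O$ there is a \emph{unique} $s_C\in S$ with $s_C(C_O)=C$.

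Now I assemble $X$ as follows. Using the axiom of choice, pick for each $S$-orbit $O$ on $(\Gamma/{\sim})\setminus\{[0]\}$ a base class $C_O\in O$ and an element $x_O\in C_O$; for each $C\in O$ set $x_C:=s_C(x_O)$, which lies in $C$ since $s_C$ preserves $\sim$. Define
\[
X:=\{0\}\,\cup\,\bigcup_{O}\{x_C : C\in O\}.
\]
By construction $X$ contains $0$ and exactly one element from each equivalence class. To verify $S(X)\subseteq X$, note that $s(0)=0\in X$, and for $x_C\in X$ we have $s(x_C)=s\,s_C(x_O)$; since $s\,s_C$ maps $C_O$ to $s(C)$ and such a map is unique by freeness, $s\,s_C=s_{s(C)}$, so $s(x_C)=x_{s(C)}\in X$. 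The main obstacle is exactly the freeness argument in the previous paragraph, which is where the type $1$ hypothesis (and Proposition \ref{DCCN}, so that $s^{-1}\in S$) is used in an essential way; once freeness is established, the choice of representatives on each orbit is forced and the equivariance is automatic.
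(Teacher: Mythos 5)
Your proof is correct. The mathematical core is the same as the paper's: the type-$1$ hypothesis forces the induced action of $S$ on the nonzero equivalence classes to be free (in the paper this appears as the statement that $s_1(\gamma)\sim s_2(\gamma)$ implies $s_1=s_2$ for a generator $\gamma$, i.e.\ that the representative map is injective on each orbit $S(\gamma)$), combined with the fact that the class of $0$ absorbs all non-generators. Where you differ is in how the equivariant transversal is assembled. The paper fixes an arbitrary set of representatives first and then runs a Zorn's lemma argument over unions of element-orbits on which the representative map is injective, extracting a maximal such union $R$ and showing that $R\cup\{0\}$ is a full transversal. You instead choose one base class and one base point per $S$-orbit on $(\Gamma/{\sim})\setminus\{[0]\}$ and transport that point around the orbit via the automorphisms $s_C$, which are unique by freeness; this makes the equivariance of $X$ automatic and makes transparent that $X\setminus\{0\}$ is a union of element-orbits $S(x_O)$, one per orbit of classes. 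Your route replaces the maximality argument by an explicit construction (a direct appeal to the axiom of choice rather than to Zorn's lemma), at the cost of introducing the quotient action on $\Gamma/{\sim}$, which the paper avoids discussing explicitly. Both arguments rely on Proposition \ref{DCCN} to ensure that $S$ is closed under inverses, and both yield the same kind of set $X$.
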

\begin{proof} Let  $\theta_1:=\{\gamma \in \Gamma | N\gamma = \Gamma\}$ be the set of generators and $\theta_0:=\{\gamma \in \Gamma | N\gamma = \{0\}\}$ be the set of non-generators of $\Gamma$. Since $\Gamma$ is an $N$-group of type $1$, $\Gamma = \theta_0 \cup \theta_1$. Let $D$ be a set of representatives w.r.t. $\sim$. We let $0$ be the representative for the equivalence class of $0$ and therefore, for any $\delta \in \theta_0$, $\delta \sim 0$. Hence, $D=X_1 \cup \{0\}$, with $X_1 \subseteq \theta_1$. Let $f$ be the function which maps every element in $\theta_1$ to its representative in $X_1$ w.r.t. $\sim$. Let $\gamma \in \theta_1$ and let $S(\gamma):=\{s(\gamma)|s \in S\}$ be the orbit of $\gamma$ under the action of $S$ on $\Gamma$. It is easy to see that $S(\gamma) \subseteq \theta_1$. Let $s_1, s_2 \in S$ and suppose that $s_1(\gamma) \sim s_2(\gamma)$. Then, for all $n \in N$, $s_1(n\gamma)=ns_1(\gamma)=ns_2(\gamma)=s_2(n\gamma)$. Since $\gamma \in \theta_1$ we see that for all $\delta \in \Gamma$, $s_1(\delta)=s_2(\delta)$ and therefore, $s_1=s_2$. This implies that the restriction $f_{|S(\gamma)}$ of $f$ to the orbit $S(\gamma)$ is an injective map. Let $K:=\{\cup_{\gamma \in J}S(\gamma)|J \subseteq \theta_1 \ \text{and}\  f_{|\cup_{\gamma \in J}S(\gamma)}\  \text{is injective}\}$. As we have just shown, $f$ is injective on any single orbit $S(\gamma)$, $\gamma \in \theta_1$. Consequently, $K$ is not the empty set. $K$ is ordered w.r.t. set inclusion $\subseteq$. Let $I$ be an index set such that for $i \in I$, $C_i \in K$ and $(C_i)_{i \in I}$ forms a chain in $K$. Let $M:=\cup_{i \in I}C_i$. So, $M=\cup_{i\in I}(\cup_{\gamma \in M_i}S(\gamma))$ where for $i \in I$, $M_i \subseteq \theta_1$ are suitable sets such that $(C_i)_{i \in I}$ forms a chain. So, the set $M$ is a union of  unions of orbits and consequently, $M$ is a union of orbits of elements from $\theta_1$. If we can show that $f$ is injective on $M$, then $M \in K$. Suppose $f$ is not injective on $M$. So there are $x,y \in M$, $x \neq y$ such that $f(x)=f(y)$. Thus, there are $j,l \in I$ such that $x \in C_j$ and $y \in C_l$. Since $(C_i)_{i \in I}$ forms a chain, we either have $C_j \subseteq C_l$ or $C_l \subseteq C_j$. So, either $x \in C_l$ and $y \in C_l$ or $x \in C_j$ and $y \in C_j$. $f$ is injective on $C_j$ as well as on $C_l$ and consequently we have $f(x) \neq f(y)$ which is a contradiction to the assumption that $f$ is not injective on $M$. Thus, $M \in K$ and $M$ is an upper bound for the chain $(C_i)_{i \in I}$. By Zorn's Lemma, $K$ contains a maximal element $R$, say. We claim that $R \cup \{0\}$ is a set of representatives w.r.t. $\sim$ which is invariant under the action of $N$-automorphisms of $\Gamma$. 

Note that as an element of $K$, $R$ is a union of orbits of $S$, so $S(R) \subseteq R$. Since $f$   is injective on $R$, any two elements of $R$ are in different equivalence classes. Suppose there is an element $\alpha \in \theta_1$ such that $\alpha \not \sim r$ for any $r \in R$. Let $s \in S$ and suppose there is an element $r \in R$ such that $s(\alpha) \sim r$. Since $s$ is an $N$-automorphism, also the inverse function $s^{-1}$ is an $N$-automorphism and contained in $S$ (see Proposition \ref{DCCN}). Thus, for all $n \in N$, $s(n\alpha)=ns(\alpha)=nr$ and therefore $n\alpha=ns^{-1}(r)$, so $\alpha \sim s^{-1}(r) \in R$. This is a contradiction to the assumption $\alpha \not \sim r$ for any $r \in R$ and so we see that for any $s \in S$ and any $r \in R$, $s(\alpha) \not \sim r$. But then, $f$ is injective on $R \cup S(\alpha)$, so $R \cup S(\alpha) \in K$. Clearly, $S(\alpha) \not \subseteq R$ and so $R$ is properly contained in $R \cup S(\alpha)$. This contradicts the maximality of $R$. Since we also  have $\delta \sim 0$ for any element $\delta \in \theta_0$ we see that $X:=R \cup \{0\}$ is a set of representatives w.r.t $\sim$. Since $S(0)=0$ and $S(R) \subseteq R$, $S(X) \subseteq X$.
\end{proof}
We keep the notation of Lemma \ref{Lemma1} to give some comments. We have seen in the proof of the lemma, that given an element $\gamma \in \theta_1$, then $S(\gamma)$ is a set of $\sim$ inequivalent elements. It is easy to see that $S(\theta_1) \subseteq \theta_1$. Consequently, the result of Lemma \ref{Lemma1} is immediate and we would not have to apply Zorn's Lemma if the $N$-group $\Gamma$ has finitely many orbits w.r.t. the action of $S$ on $\Gamma$, in particular  this is the case when $\Gamma$ is finite.

\section{Density theorems}
We will now prove two theorems which are our main theorems of this paper. First we show that up to isomorphism zero symmetric and $1$-primitive near-rings show up as dense subnear-rings of sandwich centralizer  near-rings with special conditions on $X, \Gamma, \phi$ and $S$. Following this theorem we then can easily prove a similar result for $2$-primitive near-rings. We should make clear what density means (see also \cite{Pilz1}, Proposition 4.26) and fix some more notation.

\begin{defn}\label{Dens} $F$ is a dense subnear-ring of $M_0(X, \Gamma, \phi, S)$ if and only if $\forall s \in \Bbb{N}$ $\forall x_1, \ldots , x_s \in X$ $\forall g \in M_0(X, \Gamma, \phi, S)$$\exists f \in F$: $f(x_i)=g(x_i)$ for all $i \in \{1, \ldots, s\}$. 
\end{defn}
Also, we need the concept of fixedpointfreeness. 
\begin{defn} Let $S$ be a group of automorphisms of a group $\Gamma$. Let $M \subseteq \Gamma\setminus \{0\}$ such that $S(M) \subseteq M$. $S$ is called fixedpointfree on $M$ if for $s \in S$ and $m \in M$, $s(m)=m$ implies $s = id$, $id$ being the identity function. $S$ is called a fixedpointfree automorphism group of $\Gamma$ if it acts fixedpointfree on $\Gamma \setminus \{0\}$.
\end{defn}
Let $(\Gamma, +)$ be a group. If $I$ is a normal subgroup of $\Gamma$ we will denote this as $I \triangleleft \Gamma$. For $\delta \in \Gamma$, $\delta+I$ is the coset of $\delta$. $\emptyset$ stands for the empty set. 

As already pointed out in the introduction, $1$-primitive near-rings which are rings are primitive rings in the ring theoretical sense (see \cite{Pilz1}, Proposition 4.8). So, we restrict our discussion to non-rings. We are now ready to formulate our first  theorem.

\begin{thm}\label{EEst1}  Let $N$ be a zero symmetric near-ring which is not a ring. Then the following are equivalent:
\begin{enumerate}
\item $N$ is $1$-primitive.
\item There exist 
\begin{enumerate}
\item a group $(\Gamma,+)$,
\item  a set $X=\{0\} \cup X_1 \subseteq \Gamma$, $X_1 \neq \emptyset$, $0 \not \in X_1$ and $0$ being the zero of $\Gamma$,
 \item $S \leq \mathrm{Aut}(\Gamma,+)$, with $S(X) \subseteq X$ and $S$ acting without fixed points on $X_1$,
\item a function $\phi : \Gamma \rightarrow X$ with $\phi_{|X} = id$, $\phi(0)=0$ and such that $\forall \gamma \in \Gamma\ \forall s \in S:\phi(s(\gamma)) = s(\phi (\gamma))$,

\end{enumerate}

such that $N$ is isomorphic to a dense subnear-ring $M_S$ of $M_0(X, \Gamma, \phi, S)$ where $X, \Gamma, \phi, S$ additionally satisfy the following property (P):

Let $\Gamma_0:=\{\gamma \in \Gamma | \phi(\gamma)=0\}$ and $C:=\{I \triangleleft \Gamma | I \subseteq \Gamma_0\ \text{and}\ \Gamma_0=\cup_{\delta \in \Gamma_0}\delta+I\ \text{and}\ \forall \gamma \in \Gamma\setminus \Gamma_0 \forall i \in I: S(\phi(\gamma + i)) = S(\phi(\gamma))\}$. Then $I \in C \Rightarrow (I=\{0\}\ \text{or}\ \exists i \in I \exists \gamma_1 \in \Gamma \setminus \Gamma_0 \exists s \in S \exists \gamma \in \Gamma: \phi(\gamma_1+i)=s(\phi(\gamma_1))\ \text{and}\ s(\gamma)-\gamma \not \in I$).

\end{enumerate}
\end{thm}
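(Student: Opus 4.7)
My plan is to prove the two directions separately, with the hard work being the construction of the sandwich centralizer structure for $(1)\Rightarrow(2)$ and the use of property (P) to deduce simplicity for $(2)\Rightarrow(1)$.

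\textbf{Plan for $(1)\Rightarrow(2)$.} Take $\Gamma$ to be a faithful $N$-group of type $1$, set $S:=\mathrm{Aut}_N(\Gamma,+)$, and let $X$ be an $S$-invariant system of representatives for $\sim$ containing $0$, as produced by Lemma \ref{Lemma1}. Define $\phi:\Gamma\to X$ by sending each $\gamma$ to its $\sim$-representative in $X$. Then $\phi_{|X}=id$ and $\phi(0)=0$ are immediate, while $\phi(s(\gamma))=s(\phi(\gamma))$ follows because $s$ is an $N$-automorphism (so $\gamma\sim\delta$ implies $s(\gamma)\sim s(\delta)$) and $s(\phi(\gamma))$ lies in the $S$-invariant set $X$. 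Fixed-point-freeness of $S$ on $X_1$ is forced by the fact that every $x\in X_1$ is a generator (non-generators all lie in the equivalence class of $0$), hence $s(x)=x$ implies $s=id$ on $Nx=\Gamma$. The embedding $h:N\hookrightarrow M_0(X,\Gamma,\phi,S)$, $h(n)(x):=nx$, is a near-ring homomorphism because $h(n_1n_2)(x)=n_1n_2x=n_1\phi(n_2x)=h(n_1)\bigl(\phi(h(n_2)(x))\bigr)$, and injective by faithfulness (using $\gamma\sim\phi(\gamma)$ to transfer the condition $nX=\{0\}$ to $n\Gamma=\{0\}$). Density is obtained by reducing to orbit-representatives in $X$: elements of $X$ in distinct $S$-orbits lie in distinct $\sim$-classes, so we may invoke the classical density theorem for $1$-primitive near-rings (each $g\in M_0(X,\Gamma,\phi,S)$ is $S$-equivariantly determined on orbit-representatives, and any finite collection of $\sim$-inequivalent generators admits simultaneous interpolation by some $n\in N$).

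\textbf{Plan for property (P).} Fix $I\in C$ and suppose the conclusion fails, i.e., for every $i\in I$, $\gamma_1\in\Gamma\setminus\Gamma_0$, and every $s\in S$ with $\phi(\gamma_1+i)=s(\phi(\gamma_1))$, one has $s(\gamma)-\gamma\in I$ for all $\gamma\in\Gamma$. Then I will show $I$ is an $N$-ideal: for $\gamma\in\Gamma_0$ the computation $n(\gamma+i)-n\gamma=n(\gamma+i)\in\Gamma_0$ lands in $I$ because $\Gamma_0$ is a union of $I$-cosets and I can actually sharpen this via the $S$-orbit condition defining $C$; for $\gamma\notin\Gamma_0$, write $n(\gamma+i)-n\gamma=n\phi(\gamma+i)-n\phi(\gamma)=s(n\phi(\gamma))-n\phi(\gamma)$ using the $S$-orbit equality inside $C$ and the centralizing condition, and apply the assumption with the choice $\gamma:=n\phi(\gamma)$. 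Thus $I$ is a non-trivial $N$-ideal strictly inside $\Gamma\setminus\{0\}$ (since $I\subseteq\Gamma_0\subsetneq\Gamma$), contradicting type $1$. Hence $I=\{0\}$ or (P) holds.

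\textbf{Plan for $(2)\Rightarrow(1)$.} Turn $\Gamma$ into an $N$-group by $n\cdot\gamma:=n(\phi(\gamma))$; associativity reads $(n_1\circ' n_2)(\phi(\gamma))=n_1(\phi(n_2(\phi(\gamma))))=n_1\cdot(n_2\cdot\gamma)$. Faithfulness uses $\phi_{|X}=id$: if $n\cdot\gamma=0$ for all $\gamma$, then $n(x)=0$ on $X$, so $n=0$. For type $1$: given $x\in X_1$, density and fixed-point-freeness of $S$ on $X_1$ together with $\phi$-equivariance allow me to realize any prescribed value $\beta\in\Gamma$ at $x$ by some $g\in M_0(X,\Gamma,\phi,S)$ (defining $g$ on the orbit of $x$ by the fixed-point-free $S$-action, zero elsewhere) and then approximating $g$ at $x$ by some $n\in N$; hence $Nx=\Gamma$. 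Elements of $\Gamma_0$ annihilate $\Gamma$ by construction. For simplicity, let $J\triangleleft\Gamma$ be an $N$-ideal with $J\neq\Gamma$. I will verify $J\in C$: taking $\gamma=0$ in the ideal condition gives $NJ\subseteq J$, forcing $J\subseteq\Gamma_0$ (else some $j\in J\setminus\Gamma_0$ would generate $\Gamma\subseteq J$); the coset condition on $\Gamma_0$ follows from $N(\delta+j)\subseteq\Gamma_0$ and type-$1$; the $S$-orbit condition $S(\phi(\gamma+j))=S(\phi(\gamma))$ is obtained by a density argument that rules out $\phi(\gamma+j),\phi(\gamma)$ lying in different $S$-orbits (otherwise pick $n$ with $n\phi(\gamma+j)\notin\Gamma_0$ and $n\phi(\gamma)=0$, contradicting $n(\gamma+j)-n\gamma\in J\subseteq\Gamma_0$). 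Then property (P) is applied: if $J\neq\{0\}$, the second alternative yields $s\in S$ and $\gamma\in\Gamma$ with $s(\gamma)-\gamma\notin J$, but the same calculation that appeared in the (P) paragraph forces $s(\gamma')-\gamma'\in J$ for all $\gamma'\in\Gamma$, a contradiction. Thus $J=\{0\}$ and $\Gamma$ is of type $1$.

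\textbf{Main obstacle.} The hardest part will be the density/separation arguments used on both sides --- producing elements of $N$ (or of $M_0(X,\Gamma,\phi,S)$) that take prescribed independent values at $S$-invariantly chosen points of $X$ --- and translating the abstract $N$-ideal condition into membership in $C$ so that property (P) can be leveraged. Handling mixed cases (one point $S$-fixed, others not; orbits that collide) in the $S$-equivariant interpolation is where the delicate book-keeping lives.
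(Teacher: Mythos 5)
Your overall architecture coincides with the paper's: the same construction of $X,S,\phi$ via Lemma \ref{Lemma1}, the same embedding $n\mapsto f_n$ with $f_n(x)=nx$, the same contradiction scheme for property (P) (a failure of (P) for a non-trivial $I\in C$ manufactures a non-trivial $N$-ideal), and the same two-case analysis ($J\notin C$ versus $J\in C$) in the converse direction. Those parts of your plan are sound and would flesh out into essentially the paper's proof.

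There is, however, one genuine gap, in the density step of $(1)\Rightarrow(2)$. You justify the simultaneous interpolation by saying that ``elements of $X$ in distinct $S$-orbits lie in distinct $\sim$-classes'' and that ``any finite collection of $\sim$-inequivalent generators admits simultaneous interpolation by some $n\in N$.'' The second claim is false, and the first is not the relevant hypothesis. The classical interpolation theorem (Pilz, Theorem 4.30) requires the generators to have pairwise \emph{distinct annihilators}, and $\sim$-inequivalence does not imply this: if $x_2=s(x_1)$ for some $s\in S\setminus\{id\}$, then $x_1\not\sim x_2$ (indeed the whole orbit $S(x_1)$ consists of pairwise $\sim$-inequivalent elements, which is exactly why Lemma \ref{Lemma1} needs orbits inside $X$), yet $(0:x_1)=(0:x_2)$ and every $n$ satisfies $nx_2=s(nx_1)$, so the values at $x_1$ and $x_2$ can never be prescribed independently. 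The missing ingredient is the lemma the paper proves at this point: two elements of $X_1$ lying in \emph{different $S$-orbits} must have \emph{different annihilators}. This is shown by assuming $(0:x_1)=(0:x_2)$ for generators $x_1,x_2$ and constructing the well-defined $N$-automorphism $nx_1\mapsto nx_2$, which then forces $x_2$ into the orbit $S(x_1)$ (using that some $k\in N$ satisfies $kx_1=x_1$ and that $X$ is a transversal of $\sim$). Only after this reduction to orbit representatives with pairwise distinct annihilators can Theorem 4.30 be invoked; the extension from orbit representatives to arbitrary points of $X_1$ then uses $S$-equivariance and fixed-point-freeness exactly as you describe. Without that annihilator lemma your density argument does not go through.
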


\begin{proof} $(1) \Rightarrow (2)$: Let $N$ act $1$-primitively on the $N$-group $\Gamma$. Let  $\theta_1:=\{\gamma \in \Gamma | N\gamma = \Gamma\}$ be the set of generators and $\theta_0:=\{\gamma \in \Gamma | N\gamma = \{0\}\}$ be the set of non-generators of $\Gamma$. By $1$-primitivity of $N$, $\Gamma = \theta_1 \cup \theta_0$. Let $S=\mathrm{Aut}_N(\Gamma,+)$. On $\Gamma$ we define the equivalence relation $\sim$ as in Definition \ref{Def1} by $\gamma_1 \sim \gamma_2$ iff for all $n \in N$, $n\gamma_1 = n\gamma_2$. According to Lemma \ref{Lemma1} we choose a set of representatives $X$ of the equivalence relation $\sim$ in a way that $S(X) \subseteq X$ and $0$ is the representative of the equivalence class of $0$. Note that any element in $\theta_0$ is equivalent to $0$ w.r.t. $\sim$. Thus, $X=X_1\cup \{0\}$, where $X_1 \subseteq \theta_1$. Note that $X_1 \neq \emptyset$ because as an $N$-group of type $1$, $\Gamma$ has a generator.

Let $\phi: \Gamma \rightarrow X, \gamma \mapsto x$ where $\gamma \sim x$. Then $\phi_{|X}=id$ and $\phi (0)=0$ because the representative of the zero equivalence class was taken to be zero.

Next we show that for all $\gamma \in \Gamma$ and $s \in \mathrm{Aut}_N(\Gamma,+)$ we have $s(\phi(\gamma))=\phi(s(\gamma))$. To show this, we first prove that $s(\phi(\gamma)) \sim s(\gamma)$ for any $\gamma \in \Gamma$. Let $n \in N$. Then, $ns(\phi(\gamma))=s(n\phi(\gamma))$. Now, $\phi(\gamma) \sim \gamma$, so $s(n\phi(\gamma))=s(n\gamma)=ns(\gamma)$. This shows that $s(\phi(\gamma)) \sim s(\gamma)$. Consequently, by the definition of $\phi$, $\phi(s(\phi(\gamma)))=\phi(s(\gamma))$. Since $S(X) \subseteq X$ and $\phi_{|X}=id$ we have that $\phi(s(\phi(\gamma)))=s(\phi(\gamma))$ which proves the desired property.

Next we prove fixedpointfreeness of $S$ on $X_1$. Note that $S(X_1) \subseteq X_1$ because $S(X) \subseteq X$ and $S$ is a group of automorphisms, so only the zero $0$ in $X$ is mapped to zero. Let $\gamma \in X_1 \subseteq \theta_1$. Then $N\gamma = \Gamma$. Suppose that $s(\gamma)=\gamma$. Therefore, for all $n \in N$ we get $ns(\gamma)=s(n\gamma)=n\gamma$. Thus, for all $\delta \in \Gamma$ we have $s(\delta)=\delta$ and $s=id$.

 Now we show how to embed $N$ into $M_0(X, \Gamma, \phi, S)$. For every $n \in N$, let $f_n$ be the function $f_n:X \longrightarrow \Gamma, x \mapsto nx$. We now prove that the mapping $h: n \mapsto f_n$ is an embedding of $N$ into $M_0(X, \Gamma, \phi, S)$.

First we show that for $n \in N$, $f_n \in M_0(X,\Gamma, \phi, S)$, so for all $s \in S$, for all $x \in X$ we must have $s(f_n(x))=f_n(s(x))$. Since $s$ is an $N$-automorphism we get $f_n(s(x))= n(s(x))= s(nx)=s(f_n(x))$ and consequently, $f_n \in M_0(X, \Gamma, \phi, S)$ since also $f_n(0)=0$. So, $h$ maps $N$ into $M_0(X, \Gamma, \phi, S)$.

$h$ is a near-ring homomorphism: Let $j$ and $k$ be arbitrary elements of $N$. Then $h(j+k)=f_{(j+k)}= f_j +f_k$ by right distributivity of $N$. Let $x \in X$. Then $h(jk)(x)=f_{jk}(x)=(jk)x$. On the other hand, $h(j) \circ'h(k) =  f_j \circ \phi \circ  f_k$. So, for every $x \in X$, $f_j \circ \phi \circ f_k(x)= j(\phi(kx))$. By definition of $\phi$ we know that $\phi(kx) \sim kx$ and consequently, $j(\phi(kx))=j(kx)=(jk)x$. This shows that $h(j)\circ'h(k)=f_{jk}=h(jk)$.

$h$ is injective: Since $h$ is a near-ring homomorphism, it suffices to show that the kernel of $h$ is zero. Suppose there exists an element $j \in N$ such that  $f_{j}$ is the zero function. This means that $f_j(x)=0$ for all $x \in X$. Since $X$ is a set of representatives w.r.t. $\sim$, this implies $j\Gamma = \{0\}$. By faithfulness of $\Gamma$ we get $j=0$. Hence, $h$ is injective and this finally proves that $h$ is an embedding.

So we can embed $N$ into the near-ring $M_0(X, \Gamma, \phi, S)$ and we let $h(N)=:M_S$. Consequently, $N \cong M_S$ and it remains to show that $M_S$ is a dense subnear-ring of $M_0(X,\Gamma, \phi, S)$  where $X, \Gamma, \phi, S$ additionally satisfy the property (P).

Suppose  $x_1 \in X_1$ and $x_2 \in X_1$ are from different orbits of $S$ acting on $X_1$ and suppose $x_1$ and $x_2$ have the same annihilator. Then, $Nx_1=Nx_2=\Gamma$ and $s: \Gamma \longrightarrow \Gamma, nx_1 \mapsto nx_2$ is a well defined $N$-automorphism of $\Gamma$, which is straightforward to see. Since $x_1 \in \theta_1$, there is an element $k \in N$ such that $kx_1=x_1$. Consequently, $s(x_1)=kx_2$. For any $n \in N$ we have $nx_1=nkx_1$ and therefore $n-nk \in (0:x_1)=(0:x_2)$. It follows that $nx_2=nkx_2$ for all $n \in N$ which means that $s(x_1)=kx_2 \sim x_2$. Since $s(x_1) \in X_1$, $s(x_1)=\phi(s (x_1))=x_2$ which contradicts the assumption that $x_1 \in X_1$ and $x_2 \in X_1$ are from different orbits of $S$ acting on $X_1$. So, elements from different orbits of $S$ acting on $X_1$ must have different annihilators.

Take $x_1,\ldots, x_l \in X_1$, $l \in \Bbb{N}$, from finitely many  different orbits of $S$ acting on  $X_1$. Then all elements in $\{x_1, \ldots, x_l\}$ have different annihilators as we have just shown. Since $N$ is not a ring we can apply Theorem 4.30 of \cite{Pilz1} to get that for all $\gamma_1, \ldots, \gamma_l \in \Gamma$ there exists some $n \in N$ such that $nx_i=\gamma_i$ for all $i \in \{1, \ldots, l\}$. Hence, for all $i \in \{1,\ldots, l\}$, $h(n)(x_i)=f_n(x_i)=nx_i=\gamma_i$. 

 We now have to show that $\forall l \in \Bbb{N}$ $\forall x_1, \ldots , x_l \in X$ $\forall f \in M_0(X, \Gamma, \phi, S)$$\exists m \in M_S$: $m(x_i)=f(x_i)$ for all $i \in \{1, \ldots, l\}$. 

For any function $f \in M_0(X, \Gamma, \phi, S)$ and any function $m \in M_S$ we have $f(0)=m(0)=0$.  So it suffices to consider the case that for $l \in \Bbb{N}$, $x_1, \ldots , x_l \in X_1$. Let $f \in M_0(X, \Gamma, \phi, S)$. Let $v \in \Bbb{N}$ and let $z_1, \ldots , z_v$ be a set of orbit representatives for the elements $x_1, \ldots, x_l \in X_1$ under the action of $S$ on $X_1$. Thus, $z_1, \ldots , z_v$ have different annihilators and so there is an element $m \in N$ such that $f_m(z_i)=f(z_i)$ for $i \in \{1, \ldots, v\}$. For $k \in \{1, \ldots, l\}$ there exists a unique $j \in \{1, \ldots ,v\}$ such that $x_k \in S(z_j)$. Then $x_k=s(z_j)$ for some, by fixedpointfreeness of $S$ on $X_1$, unique $s$ and so, $f_m(x_k)=f_m(s(z_j))=s(f_m(z_j))=s(f(z_j))=f(s(z_j))=f(x_k)$. So, $f_m$ and $f$ are equal functions when restricted to the set $\{x_1, \ldots , x_l\}$ and also $f_m \in M_S$. This proves density of $M_S$ in $M_0(X, \Gamma, \phi, S)$.

Finally we have to show that $X, \Gamma, \phi, S$ satisfy the property (P). Let $I \in C$. Assume that $I \neq \{0\}$. $I \neq \Gamma$ since $\Gamma_0 \neq \Gamma$, so $I$ is a non-trivial normal subgroup of $\Gamma$. Suppose property (P) does not hold, so assume that  for all $i \in I$, for all $\gamma_1 \in \Gamma \setminus \Gamma_0$, for all $s \in S$ and for all $\gamma \in \Gamma $ either $\phi(\gamma_1+i) \neq s(\phi(\gamma_1))$ holds or $s(\gamma)-\gamma \in I$ holds. Since $I \in C$, for all $i \in I$ and for all $\gamma_1 \in \Gamma\setminus \Gamma_0$ there is an $s \in S$ such that $\phi(\gamma_1+i)=s(\phi(\gamma_1))$ and therefore, for this $s \in S$, $s(\gamma)-\gamma \in I$ for all $\gamma \in \Gamma$.

 Let $n \in N$ and $\gamma_0 \in \Gamma_0$. Thus, $\gamma_0+I \subseteq \Gamma_0$ because by definition of the elements in $C$, $\Gamma_0$ is a union of cosets of $I$. Consequently, for all $j \in I$, $\gamma_0+j \in \gamma_0 + I \subseteq \Gamma_0$. Thus, $n(\gamma_0+j) - n\gamma_0 = n\phi(\gamma_0+j)-n\phi(\gamma_0)=0-0 \in I$. Let $\gamma_1 \in \Gamma \setminus \Gamma_0$ and $i \in I$. Consequently, $\phi(\gamma_1+i)=s(\phi(\gamma_1))$ for some $s \in S$ and so, since property (P) is assumed not to hold, $s(\gamma)-\gamma \in I$ for all $\gamma \in \Gamma$. So, for all $n \in N$, $n(\gamma_1+i)-n\gamma_1=n\phi(\gamma_1+i)-n\phi(\gamma_1)= ns(\phi(\gamma_1))-n\phi(\gamma_1)=s(n\phi(\gamma_1))-n\phi(\gamma_1) \in I$.  This shows that $I$ is a non-trivial and proper $N$-ideal of $\Gamma$, contradicting that $N$ is 1-primitive on $\Gamma$. Hence, property (P) must hold.

$(2) \Rightarrow (1)$:  We have to show that $N \cong M_S$ is a $1$-primitive near-ring. $\Gamma$ is an $M_S$-group in a natural way by defining the action $\odot$ of $M_S$ on $\Gamma$ as $m \odot \gamma:= m(\phi (\gamma))$ for $m \in M_S$ and $\gamma \in \Gamma$. Since $\phi_{|X}=id$ we have $X=\phi (\Gamma)$, so $\Gamma$ is a faithful $M_S$-group.

Let $\gamma \in \Gamma$ and suppose $\phi(\gamma)=0$, so $\gamma \in \Gamma_0$. Then clearly $M_S\odot \gamma = \{0\}$. On the other hand there exist elements $\gamma \in \Gamma$, such that $0 \neq \phi(\gamma) \in X_1$ because $\phi_{|X}=id$ and $X_1 \neq \emptyset$. Let $\gamma$ be such that $\phi(\gamma) \in X_1$, so $\gamma \in \Gamma \setminus \Gamma_0$. Let $\delta \in \Gamma$. We now define a function $f: X \longrightarrow \Gamma$ in the following way: $f(\phi(\gamma)):=\delta$. Let $s \in S$. Since $S$ acts without fixedpoints on $X_1$, we can well define $f(s(\phi(\gamma))):=s(f(\phi(\gamma)))=s(\delta)$ and $f(X \setminus S(\phi(\gamma))):=\{0\}$. From the definition of $f$ we see that $f \in M_0(X,\Gamma, \phi, S)$, so by density of $M_S$, there is a function $m \in M_S$ with $m(\phi(\gamma))=f(\phi(\gamma))=\delta$. Since $\delta \in \Gamma$ was chosen arbitrary, this shows that $M_S \odot \gamma = \Gamma$. Consequently, for $\delta \in \Gamma_0$ we have $M_S\odot \delta = \{0\}$ and for $\gamma \in \Gamma \setminus \Gamma_0$ we have $M_S\odot \gamma = \Gamma$. 

We now show that there are no non-trivial $M_S$-ideals in $\Gamma$. Suppose that $I$ is a non-trivial $M_S$-ideal of $\Gamma$. Then $I$ is a non-trivial normal subgroup of $(\Gamma,+)$, and for all $m \in M_S$, $\gamma \in \Gamma$ and $i \in I$ we have that $m \odot (\gamma+i)-m\odot \gamma=m(\phi(\gamma+i))-m(\phi(\gamma)) \in I$. Since $m(\phi(0))=0$ for all $m \in M_S$, $I$ being an $M_S$-ideal implies that $m(\phi(i)) \in I$ for all $i \in I$ and all $m \in M_S$. This implies that $I \subseteq \Gamma_0$. Let $\delta \in \Gamma_0$, $i \in I$. Then, for all $m \in M_S$, $m(\phi(\delta+i))-m(\phi(\delta))=m(\phi(\delta+i)) \in I$. This shows that $\delta+i \in \Gamma_0$ and so, $\Gamma_0$ is a union of cosets of $I$. 

Assume that $I$ is not contained in the set $C$. Thus, there exists an element $\gamma \in \Gamma \setminus \Gamma_0$ and an element $i \in I$ such that $S(\phi(\gamma + i)) \neq S(\phi(\gamma))$. Since $S$ is a group, this implies that for all $s \in S$, $\phi(\gamma+i) \neq s(\phi(\gamma))$. This means that $\phi(\gamma)$ and $\phi(\gamma + i)$ are in different orbits of $S$ acting on  $X$.  Suppose that $\gamma + i \in \Gamma_0 $. Since $I$ is an $M_S$-ideal, this implies that $m(\phi(\gamma)) \in I$ for all $m \in M_S$, hence we must have $\phi(\gamma)=0$, a contradiction to $\gamma \in \Gamma\setminus \Gamma_0$. So we have that $\phi(\gamma + i) \neq 0$ as well as $\phi(\gamma) \neq 0$ and we now define two functions $f_1: X \longrightarrow \Gamma$ and $f_2: X \longrightarrow \Gamma$. 

Let $f_1(\phi(\gamma)):=\delta_1 \in I$, for $s \in S$ let $f_1(s(\phi(\gamma))):=s(f_1(\phi(\gamma)))$ and $f_1(X\setminus S(\phi(\gamma))):=\{0\}$. Let $f_2(\phi(\gamma +i)):=\delta_2 \not \in I$, for $s \in S$ let $f_2(s(\phi(\gamma + i))):=s(f_2(\phi(\gamma + i)))$ and $f_2(X\setminus S(\phi(\gamma+i))):=\{0\}$. $f_1$ and $f_2$ are well defined because of fixedpointfreeness of $S$ on $X_1$. It is a routine check to see that $f_1$ and $f_2$ are elements in $M_0(X, \Gamma, \phi, S)$. Since for all $s \in S$, $\phi(\gamma+i) \neq s(\phi(\gamma))$ we have that $f_1(\phi(\gamma+i))=0$ as well as $f_2(\phi(\gamma))=0$.

We now have that $(f_1+f_2)(\phi(\gamma + i))-(f_1+f_2)(\phi (\gamma))$=$f_1(\phi(\gamma+i))+f_2(\phi(\gamma+i))-f_2(\phi(\gamma))-f_1(\phi(\gamma))$=$0+\delta_2-0-\delta_1 \not \in I$. By density of $M_S$ in $M_0(X,\Gamma, \phi, S)$, there is an element $m \in M_S$ such that $m(\phi(\gamma + i))=(f_1+f_2)(\phi(\gamma + i))$ and $m(\phi (\gamma))=(f_1+f_2)(\phi (\gamma))$. Consequently, $I$ is not an $M_S$-ideal.

Assuming that $I$ is not contained in the set $C$ contradicts our assumption that $I$ is an $M_S$-ideal. So, we now assume that $I$ is contained in $C$. 
Consequently, by property (P), there exists $i \in I$, there exists $\gamma_1 \in \Gamma \setminus \Gamma_0$,  there exists $s \in S$ and there exists $\gamma \in \Gamma$ such that $\phi(\gamma_1+i)=s(\phi(\gamma_1))$ and $s(\gamma)-\gamma \not \in I$. Since $\gamma_1 \in \Gamma \setminus \Gamma_0$, $M_S \odot \gamma_1 = \Gamma$, so there exists an element $m \in M_S$ such that $m(\phi(\gamma_1))=\gamma$. Consequently, $m (\phi(\gamma_1+i))-m (\phi(\gamma_1))=m(s(\phi(\gamma_1)))-m(\phi(\gamma_1))=s(m(\phi(\gamma_1)))-m(\phi(\gamma_1))=s(\gamma)-\gamma \not \in I$. This is again a contradiction to the assumption that $I$ is an $M_S$-ideal. 

This shows that there exist no non-trivial $M_S$-ideals in $\Gamma$, so $M_S$ is 1-primitive on $\Gamma$.

\end{proof}
Property (P) of Theorem \ref{EEst1} is a technical condition which excludes subgroups of $\Gamma$ to be $M_S$-ideals, in the language of Theorem \ref{EEst1}. Since any $2$-primitive near-ring is also $1$-primitive, Theorem \ref{EEst1} applies to zero symmetric and $2$-primitive near-rings also. When considering $2$-primitive near-rings the technical condition of property (P) can be much more simplified. This leads to an especially simple version of the theorem.

\begin{thm}\label{EEst2}  Let $N$ be a zero symmetric near-ring which is not a ring. Then the following are equivalent:
\begin{enumerate}
\item $N$ is $2$-primitive.
\item There exist 
\begin{enumerate}
\item a group $(\Gamma,+)$,
\item  a set $X=\{0\} \cup X_1 \subseteq \Gamma$, $X_1 \neq \emptyset$, $0 \not \in X_1$ and $0$ being the zero of $\Gamma$,
 \item $S \leq \mathrm{Aut}(\Gamma,+)$, with $S(X) \subseteq X$ and $S$ acting without fixed points on $X_1$,
\item a function $\phi : \Gamma \rightarrow X$ with $\phi_{|X} = id$, $\phi(0)=0$ and such that $\forall \gamma \in \Gamma\ \forall s \in S:\phi(s(\gamma)) = s(\phi (\gamma))$,
\end{enumerate}

such that $N$ is isomorphic to a dense subnear-ring $M_S$ of $M_0(X, \Gamma, \phi, S)$ where  $\Gamma_0:=\{\gamma \in \Gamma | \phi(\gamma)=0\}$ does not contain any non-trivial subgroups of $\Gamma$.

\end{enumerate}
\end{thm}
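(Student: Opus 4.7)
My plan is to deduce Theorem~\ref{EEst2} directly from Theorem~\ref{EEst1}, showing that for $2$-primitive near-rings the technical condition (P) collapses to the much more transparent statement that $\Gamma_0$ contains no non-trivial subgroups of $\Gamma$. The proof splits naturally into two short reductions, one for each direction, each exploiting the fact that subgroups of $\Gamma$ sitting inside $\Gamma_0$ are automatically annihilated by $N$ (resp. $M_S$) and are therefore $N$-subgroups (resp. $M_S$-subgroups) in the strongest sense.

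For $(1)\Rightarrow(2)$: Since every $N$-group of type $2$ is of type $1$, $N$ is $1$-primitive, and Theorem~\ref{EEst1} produces data $(\Gamma, X, S, \phi)$ with $N \cong M_S$ dense in $M_0(X,\Gamma,\phi,S)$. In the construction from that proof, $\phi(\gamma)=0$ is equivalent to $\gamma \sim 0$, hence to $n\gamma=0$ for all $n \in N$, which by $1$-primitivity means $\gamma \in \theta_0$. Thus $\Gamma_0 = \theta_0$. If $H \leq \Gamma$ satisfies $H \subseteq \Gamma_0$, then $NH = \{0\} \subseteq H$, so $H$ is an $N$-subgroup of $\Gamma$. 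By type $2$, either $H=\{0\}$ or $H=\Gamma$; the latter would force $N\Gamma=\{0\}$, contradicting $N\Gamma \neq \{0\}$. Hence $\Gamma_0$ contains no non-trivial subgroups of $\Gamma$.

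For $(2)\Rightarrow(1)$: The hypothesis that $\Gamma_0$ has no non-trivial subgroups forces the set $C$ from Theorem~\ref{EEst1} to equal $\{\{0\}\}$, since every $I \in C$ is a (normal) subgroup of $\Gamma$ contained in $\Gamma_0$. Consequently property~(P) holds vacuously, so Theorem~\ref{EEst1} yields that $M_S$ is $1$-primitive on $\Gamma$. To upgrade to $2$-primitivity, let $H$ be an $M_S$-subgroup of $\Gamma$. If $H \not\subseteq \Gamma_0$, choose $\gamma \in H \setminus \Gamma_0$; the proof of Theorem~\ref{EEst1} established $M_S \odot \gamma = \Gamma$ for such $\gamma$, whence $H = \Gamma$. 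Otherwise $H \subseteq \Gamma_0$, and the hypothesis gives $H=\{0\}$. Finally $M_S \odot \Gamma \neq \{0\}$, because $X_1 \neq \emptyset$ supplies elements outside $\Gamma_0$. Hence $\Gamma$ is an $M_S$-group of type $2$ and $M_S$ is $2$-primitive.

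I do not expect any serious obstacle: both directions are one-page reductions to Theorem~\ref{EEst1}. The only mild points to double-check are, in $(1)\Rightarrow(2)$, that the explicit choice of representative system and sandwich function in Theorem~\ref{EEst1} really gives $\Gamma_0 = \theta_0$ (so subgroups of $\Gamma_0$ are honest $N$-subgroups), and, in $(2)\Rightarrow(1)$, that the dichotomy $H \subseteq \Gamma_0$ versus $H \not\subseteq \Gamma_0$ is exhaustive and each case is settled by the respective hypothesis.
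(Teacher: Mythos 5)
Your proposal is correct and follows essentially the same route as the paper: deduce everything from Theorem \ref{EEst1}, observe that $\Gamma_0=\theta_0$ so any subgroup inside $\Gamma_0$ is an $N$-subgroup (forcing it to be trivial by type $2$), and conversely note that any proper $M_S$-subgroup must lie in $\Gamma_0$ and hence be trivial. The only difference is that you additionally verify property (P) to invoke $1$-primitivity, which is harmless but not needed, since $2$-primitivity follows directly from faithfulness, $M_S\odot\gamma=\Gamma$ for $\gamma\notin\Gamma_0$, and the absence of non-trivial $M_S$-subgroups.
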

\begin{proof}  $(1) \Rightarrow (2):$ Since $N$ is $2$-primitive it is also $1$-primitive and hence Theorem \ref{EEst1} and its proof of $(1) \Rightarrow (2)$ applies. So, let $\Gamma$ be the $N$-group of type $2$ the near-ring acts on $2$-primitively. Let $\phi$ be as in the proof of $(1) \Rightarrow (2)$ of Theorem \ref{EEst1}. It only remains to show that $\Gamma_0$ does not contain any non-trivial subgroups of $\Gamma$. Suppose $K \subseteq \Gamma_0$ is a subgroup of $\Gamma$. Since $\phi(K)=\{0\}$ we know from the definition of $\phi$ that $K\subseteq \theta_0=\{\gamma \in \Gamma | N\gamma = \{0\}\}$. Thus, $NK = \{0\} \subseteq K$ and $K$ is an $N$-subgroup of $\Gamma$. It follows from $2$-primitivity of $N$ that $K=\{0\}$.

$(2) \Rightarrow (1):$ As in the proof of $(2) \Rightarrow (1)$ of Theorem \ref{EEst1} one shows that $M_S$ acts faithfully on $\Gamma$ with the action $\odot$ and such that for $\delta \in \Gamma_0$ we have $M_S\odot \delta = \{0\}$ and for $\gamma \in \Gamma \setminus \Gamma_0$ we have $M_S\odot \gamma = \Gamma$. Also, $\Gamma \setminus \Gamma_0 \neq \emptyset$. Suppose that $K$ is an $M_S$-subgroup of $\Gamma$ and $K \neq \Gamma$. It follows  that $K \subseteq \Gamma_0$. By assumption, $\Gamma_0$ does not contain any non-trivial subgroups of $\Gamma$. Hence, $K=\{0\}$. Thus, $\Gamma$ contains no non-trivial $M_S$-subgroups and $M_S$ is $2$-primitive on $\Gamma$.

\end{proof}

\section{Construction of $\phi$ and examples}
 We keep the notation of the proof of Theorem \ref{EEst1} throughout the whole section. In order to construct $1$-primitive near-rings as sandwich centralizer near-rings with the help of Theorem \ref{EEst1} one must be assured that $X, \Gamma, \phi, S$ satisfy property (P). In case $X=\Gamma$ and $\phi=id$ we have that $\Gamma_0=\{0\}$. In this case property (P) is trivially fulfilled because only the trivial group $\{0\}$ is contained in $C$. In fact, in this case $M_0(X,\Gamma,\phi,S)= M_S(\Gamma):=\{f: \Gamma \rightarrow \Gamma | \forall \gamma \in \Gamma \forall s \in S: s(f(\gamma))=f(s(\gamma))\ \text{and}\ f(0)=0\}$. So, $M_0(X,\Gamma,\phi,S)= M_S(\Gamma)$ is a $1$-primitive near-ring with identity element (and thus $2$-primitive). The fact that all zero symmetric $1$-primitive near-rings with identity element which are not rings show up as dense subnear-rings of near-rings of the type $M_S(\Gamma)$ with $S$ a group of fixedpointfree automorphisms acting on $\Gamma$ is certainly the most well known density theorem for primitive near-rings (see \cite{Pilz1}, Theorem $4.52$). This result is also covered by Theorem \ref{EEst1}, Theorem \ref{EEst2} respectively, because in case of a near-ring with identity, $\phi$ as constructed in the proof of $(1) \Rightarrow (2)$ of Theorem \ref{EEst1} is just the identity function. So, $X= \Gamma$, $\Gamma_0=\{0\}$ and $M_0(X, \Gamma, \phi, S)=M_S(\Gamma)$. $S$ is acting  without fixed points on $\Gamma \setminus \{0\}$ because of condition (2c) in Theorem \ref{EEst1}.

 We need not only consider near-rings with identity to obtain situations when property (P) is easily fulfilled. Property (P) is obviously fulfilled when $C$ only contains the trivial group $\{0\}$. $C$ will only contain the trivial group  for example when $\Gamma_0$ is not a union of cosets of some non-trivial normal subgroup of $\Gamma$. Hence, probably the easiest way to obtain $1$-primitive near-rings is to take $S=\{id\}$ and any function $\phi$ mapping from $\Gamma$ to a subset $X$ of $\Gamma$ containing the zero $0$ such that $\Gamma_0$ is not a union of cosets of some non-trivial normal subgroup of $\Gamma$. Note that the examples after Definition \ref{Definition1} are of that type.

To be in a position to construct $1$-primitive near-rings or $2$-primitive near-rings with the help of our main theorems and $S \neq \{id\}$ one has to find a suitable sandwich function $\phi$ which commutes with the automorphisms in $S$ and $S$ has to act without fixed points on the set $X_1=X \setminus \{0\}$. If one has found such a function $\phi$ and the group $S$, then primitivity of the near-ring $M_0(X, \Gamma, \phi, S)$ only depends on the subgroups contained in $\Gamma_0$. In the following two propositions we show how to construct the sandwich function $\phi$ with the desired properties and show that any such $\phi$ can be constructed this way. 
\begin{prop}\label{Gabi1} Let $(\Gamma,+)$ be a group and $S \leq \mathrm{Aut}(\Gamma,+)$. Let $G \subseteq \Gamma \setminus \{0\}$ such that $S(G) \subseteq G$ and $S$ acts without fixed points on $G$. Let $\{e_i | i \in I\}$, $I$ a suitable index set, be a complete set of orbit representatives of the orbits of $S$ acting on $G$. So, $G=\cup_{i \in I}S(e_i)$. Let $\emptyset \neq J$,  $J \subseteq I$.  Let $X_1:=\cup_{j \in J} S(e_j)$ and $X:= \{0\} \cup X_1$. Let $K:=I\setminus J$. If $\emptyset \neq K$, let $f: \{e_k | k \in K\} \longrightarrow \cup_{j \in J} S(e_j)$ be a function. Define $\phi: \Gamma \longrightarrow X$ as \[\phi(\gamma):=\left\{\begin{array}{lll} 0 & \mbox{if $\gamma \in \Gamma \setminus G$} \\ \gamma & \mbox{if $\gamma \in \cup_{j \in J}S(e_j)$} \\ s(f(e_k)) & \mbox{if $K$ is not empty and $\gamma =s(e_k) \in \cup_{k \in K}S(e_k)$} \end{array} \right. \] Then, $\phi$ is a well defined function such that $\phi_{|X}=id$ and $\forall \gamma \in \Gamma\ \forall s \in S:\phi(s(\gamma)) = s(\phi (\gamma))$. Furthermore, $S$ acts without fixed points on $X_1$ and $S(X_1) \subseteq X_1$.
\end{prop}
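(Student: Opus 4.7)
The strategy is to verify each of the four claimed properties by a straightforward case analysis built on the partition of $\Gamma$ into the three regions used in the piecewise definition of $\phi$. First I would observe that $\Gamma$ decomposes disjointly as $\Gamma = (\Gamma\setminus G) \sqcup X_1 \sqcup \bigl(\cup_{k\in K}S(e_k)\bigr)$: the sets $\Gamma\setminus G$ and $G$ are disjoint by definition, while $G$ itself is the disjoint union of the $S$-orbits $S(e_i)$ (any two distinct orbits being disjoint), which splits according to $I=J\sqcup K$. Hence the three clauses in the definition of $\phi$ cover every $\gamma\in\Gamma$ in exactly one way, reducing well-definedness to the third clause alone.

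The key technical point, and the main obstacle, is well-definedness on $\cup_{k\in K}S(e_k)$: the formula $\phi(s(e_k))=s(f(e_k))$ only makes sense if $s$ is uniquely determined by the pair $(s(e_k),e_k)$. I would derive this from fixed-point-freeness of $S$ on $G$: if $s_1(e_k)=s_2(e_k)$ with $s_1,s_2\in S$, then $s_2^{-1}s_1$ fixes $e_k\in G$, so $s_2^{-1}s_1=\mathrm{id}$, i.e.\ $s_1=s_2$. Since different orbits are disjoint, the index $k$ is also uniquely determined by $\gamma$, and $\phi$ is well-defined on the third piece.

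Next I would verify the equivariance $\phi(s(\gamma))=s(\phi(\gamma))$ by case analysis on which piece contains $\gamma$. For $\gamma\in X_1$ or $\gamma\in\cup_{k\in K}S(e_k)$ the claim is immediate from the definition, since $X_1$ and every $S(e_k)$ are $S$-invariant (orbits of $S$) and $\phi$ is given by applying an element of $S$ to a fixed image. For $\gamma\in\Gamma\setminus G$ I would use that $S$ is a group of automorphisms with $S(G)\subseteq G$, so $s^{-1}(G)\subseteq G$ and consequently $s(\Gamma\setminus G)\subseteq\Gamma\setminus G$; hence both sides equal $0$. The property $\phi_{|X}=\mathrm{id}$ is immediate from the second clause together with $\phi(0)=0$ (as $0\notin G$).

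Finally, the statements that $S$ acts without fixed points on $X_1$ and that $S(X_1)\subseteq X_1$ come for free: $X_1\subseteq G$, so fixed-point-freeness on $G$ restricts to fixed-point-freeness on $X_1$, and $X_1$ is an $S$-invariant union of orbits by construction. Thus no deeper argument is needed once the well-definedness step has been dispatched.
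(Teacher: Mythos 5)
Your proposal is correct and follows essentially the same route as the paper: the same three-way partition of $\Gamma$, well-definedness of the third clause via fixed-point-freeness of $S$ on $G$ (uniqueness of $s$ and of $k$), equivariance by case analysis, and the final two claims read off from $X_1\subseteq G$ and $X_1$ being a union of orbits. Your explicit justification that $s(\Gamma\setminus G)\subseteq\Gamma\setminus G$ (via $s^{-1}\in S$ and $S(G)\subseteq G$) is a small point the paper passes over silently, but otherwise the arguments coincide.
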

\begin{proof}
Since  $X_1 \subseteq G$, $S$ acts without fixed points on $X_1$. Also, since $X_1$ is a union of orbits of $S$, $S(X_1) \subseteq X_1$. Suppose $K:=I \setminus J$ is not the empty set. Let $f:\{e_k | k \in K\} \longrightarrow \cup_{j \in J}S(e_j)$ be a function. Let $\gamma  \in \cup_{k \in K}S(e_k)$. By fixedpointfreeness of $S$ on $G$ there is a unique $s \in S$ and a unique $k \in K$ such that $\gamma = s(e_k)$. Then the definition $\phi(\gamma):=s(f(e_k))$ makes $\phi$  a well defined function. Since $0 \in \Gamma \setminus \cup_{i \in I}S(e_i)$ we have $\phi(0)=0$ and so, $\phi_{|X}=id$. It remains to show that for all $\gamma \in \Gamma$ and all $s \in S$, $s(\phi(\gamma))=\phi(s(\gamma))$. Let $\gamma \in \Gamma \setminus \cup_{i \in I}S(e_i)$ and $s \in S$. Then, $s(\phi(\gamma))=s(0)=0$ and since $s(\gamma) \in \Gamma \setminus \cup_{i \in I}S(e_i)$ we also have $\phi(s(\gamma))=0$. Let $\gamma \in \cup_{j \in J}S(e_j)$ and $s \in S$. Then, $s(\phi(\gamma))=s(\gamma)$. On the other hand, $s(\gamma) \in \cup_{j \in J}S(e_j)$ and so we also have $\phi(s(\gamma))=s(\gamma)$ by definition of $\phi$. Suppose $K$ is not empty. Let $\gamma \in \cup_{k \in K}S(e_k)$. So, $\gamma = s_1(e_k)$ for a unique $s_1 \in S$. Let $s \in S$. Then, $s(s_1(e_k)) \in \cup_{k \in K}S(e_k)$. Then, $\phi(s(\gamma))=\phi(s(s_1(e_k)))=s(s_1(f(e_k)))$. On the other hand, we also have $s(\phi(\gamma))=s(\phi(s_1(e_k)))=s(s_1(f(e_k)))$. This finally shows that $\phi(s(\gamma))=s(\phi(\gamma))$ for all $\gamma \in \Gamma$.
\end{proof}
We give a concrete example to demonstrate the construction process of the function $\phi$ in Proposition \ref{Gabi1}. We use the notation of Proposition \ref{Gabi1}. Let $(\Gamma,+):=(\Bbb{Z}_7,+)$ and $S:=\{id, -id\}$ where $-id : \Bbb{Z}_7 \rightarrow \Bbb{Z}_7, x \mapsto -x$. $S$ is a fixedpointfree automorphism group of $(\Bbb{Z}_7,+)$. Let $G:=\{1,6,2,5\}$ be the union of the orbits of $6$ and $5$. Let $e_1:=6$ and $e_2:=5$, so $I:=\{1,2\}$. Let $J:=\{1\}$ and $K:=\{2\}$. Thus, $X_1:=\{1,6\}$, the orbit of $6$, and $X:=\{0,1,6\}$. Let $f:\{5\} \rightarrow \{1,6\}, f(5)=1$ (here we could also define $f(5)=6$ resulting in a different $\phi$). Then $\phi: \Bbb{Z}_7 \rightarrow X$ is defined as follows: $0=\phi(0)=\phi(3)=\phi(4)$, $\phi(1)=1$, $\phi(6)=6$. Since $2=-id(e_2)=-id(5)$ and $5=id(e_2)=id(5)$ we get $\phi(2)=-id(f(5))=-id(1)=6$ and $\phi(5)=id(f(5))=1$. Clearly, $\phi_{|X}=id$ and for all $\gamma \in \Bbb{Z}_7$ we have $\phi(-\gamma)=-\phi(\gamma)$ as is easily seen. Thus, $\phi$ has all the desired properties as claimed in Proposition \ref{Gabi1}. The sandwich centralizer near-ring $M_0(X, \Gamma, \phi, S)$ constructed using these groups $S$ and $\Gamma$ and this set $X$ and function $\phi$ fulfilles Theorem \ref{EEst2} because $(\Gamma,+)$ is a simple group. Thus, $M_0(X, \Gamma, \phi, S)$ is a $2$-primitive near-ring without an identity element.

The next proposition shows that any sandwich function of the type we require in the Theorems \ref{EEst1} and \ref{EEst2} is of the form as constructed in Proposition \ref{Gabi1}.
\begin{prop} Let $(\Gamma,+)$ be a group and $S \leq \mathrm{Aut}(\Gamma,+)$. Let $\emptyset \neq X_1 \subseteq \Gamma \setminus \{0\}$ such that $S(X_1) \subseteq X_1$ and $S$ acts without fixed points on $X_1$. Let $X:=\{0\} \cup X_1$. Let $\phi: \Gamma \longrightarrow X$ be a function such that $\phi_{|X}=id$ and such that $\forall \gamma \in \Gamma\ \forall s \in S:\phi(s(\gamma)) = s(\phi (\gamma))$. Let  $\Gamma_0:=\{\gamma \in \Gamma | \phi(\gamma)=0\}$. Then the following hold:
\begin{enumerate}
\item $G:=\Gamma \setminus \Gamma_0$ is a non-empty set such that $S(G) \subseteq G$ and $S$ acts without fixed points on $G$. Thus, $\phi(\gamma)=0$ if $\gamma \in \Gamma \setminus G$.
\item Let $\{e_i | i \in I\}$, $I$ a suitable index set, be a complete set of orbit representatives of the orbits of $S$ acting on $G$. Then there is a non-empty subset $J \subseteq I$ such that  $\{e_j | j \in J\}$ is a complete set of orbit representatives of the orbits of $S$ acting on $X_1$. Thus, $X=\{0\} \cup_{j \in J} S(e_j)$ and $\phi(\gamma)=\gamma$ if $\gamma \in \cup_{j \in J} S(e_j)$.
\item If $K:=I\setminus J$ is not the empty set, then there is a function $f: \{e_k | k \in K\} \longrightarrow \cup_{j \in J} S(e_j)$ such that $\phi(s(e_k))=s(f(e_k))$ for all $s \in S$ and all $k \in K$ .
\end{enumerate}
\end{prop}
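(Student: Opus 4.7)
The plan is to verify each of the three claims in turn, using only the hypothesis that $\phi$ is $S$-equivariant, fixes $X$ pointwise, and sends $\Gamma$ into $X=\{0\}\cup X_1$. The central observation, used throughout, is that $\phi_{|X}=\mathrm{id}$ forces $X_1\subseteq G$: any $x\in X_1$ satisfies $\phi(x)=x\neq 0$. Combined with $S$-equivariance of $\phi$, this will give all the required orbit structure almost automatically.

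For part (1), first $G\neq\emptyset$ because $X_1\neq\emptyset$ and $X_1\subseteq G$. For $S$-invariance, if $\gamma\in G$ and $s\in S$, then $\phi(s(\gamma))=s(\phi(\gamma))\neq 0$ since $\phi(\gamma)\neq 0$ and $s$ is an automorphism, so $s(\gamma)\in G$. For fixedpointfreeness on $G$, suppose $s(\gamma)=\gamma$ with $\gamma\in G$. Then $s(\phi(\gamma))=\phi(s(\gamma))=\phi(\gamma)$, and since $\phi(\gamma)\in X\setminus\{0\}=X_1$, the hypothesis that $S$ acts fixedpointfreely on $X_1$ gives $s=\mathrm{id}$.

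For part (2), since $S(X_1)\subseteq X_1$ and $S$ is a group (hence $S^{-1}=S$), each $S$-orbit in $G$ is either entirely contained in $X_1$ or disjoint from it. Define $J:=\{i\in I\mid S(e_i)\subseteq X_1\}$; equivalently $J=\{i\in I\mid e_i\in X_1\}$, since if $s(e_i)\in X_1$ then $e_i=s^{-1}(s(e_i))\in X_1$. Then $X_1=\bigcup_{j\in J}S(e_j)$, with the union disjoint because the $S(e_i)$ are disjoint in $G$, so $\{e_j\mid j\in J\}$ is a complete set of orbit representatives of $S$ on $X_1$. $J$ is non-empty since $X_1$ is non-empty. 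The identity $\phi(\gamma)=\gamma$ on $\bigcup_{j\in J}S(e_j)\subseteq X$ is then immediate from $\phi_{|X}=\mathrm{id}$.

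For part (3), if $k\in K=I\setminus J$ then $e_k\in G\setminus X_1$, so $\phi(e_k)\neq 0$ and $\phi(e_k)\in X_1=\bigcup_{j\in J}S(e_j)$. Define $f(e_k):=\phi(e_k)$; this immediately lands in the required codomain. For any $s\in S$, the $S$-equivariance of $\phi$ gives $\phi(s(e_k))=s(\phi(e_k))=s(f(e_k))$, as required. I do not foresee a serious obstacle here: the only points to watch are that orbit decompositions of $G$ and $X_1$ are compatible (handled by $S$-invariance plus the fact that $S$ is a group) and that the value $\phi(e_k)$ actually lies in $X_1$ rather than being zero (handled by $e_k\in G$).
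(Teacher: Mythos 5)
Your proof is correct and follows essentially the same route as the paper's: establish $X_1\subseteq G$ and use $S$-equivariance of $\phi$ to get invariance and fixedpointfreeness of $G$, take $J$ to index the orbits lying in $X_1$, and define $f(e_k):=\phi(e_k)$. The only cosmetic difference is that you argue $S(G)\subseteq G$ directly while the paper argues by contradiction; the content is identical.
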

\begin{proof} $G$ is not empty since $X_1 \subseteq G$. Let $g \in G$ and suppose that $\phi(s(g))=0$. Then, $s(\phi(g))=0$ and since $s$ is an automorphism, $\phi(g)=0$ which is a contradiction to the definition of $G$. So, $S(G) \subseteq G$. Let $g \in G$. Then, $\phi(g) \neq 0$ and consequently, $\phi(g) \in X_1$. Suppose $s(g)=g$, for some non-identity automorphism $s \in S$. Then, $s(\phi(g))=\phi(s(g))=\phi(g)$. Since $\phi(g) \in X_1$, this contradicts fixedpointfreeness of $S$ on $X_1$. So, $S$ acts without fixed points on $G$. Let $\{e_i | i \in I\}$, $I$ a suitable index set, be a complete set of orbit representatives of the orbits of $S$ acting on $G$. Since $S(X_1) \subseteq X_1$ we know that $X_1$ is a union of orbits of $S$, so there is a subset $J \subseteq I$ such that $X_1=\cup_{j \in J}S(e_j)$. Let $K:=I \setminus J$. Suppose $K$ is not the empty set and let $k \in K$ and $s \in S$. Then, $\phi(s(e_k))=s(\phi(e_k))$. $\phi(e_k) \in \cup_{j \in J} S(e_j)$ and so we can define the function $f:\{e_k | k \in K\} \longrightarrow \cup_{j \in J} S(e_j), e_k \mapsto \phi(e_k)$.
\end{proof}

The construction of the sandwich function $\phi$ becomes especially simple if we let $I=J$, so $G=X_1$ in the language of Proposition \ref{Gabi1}. Let $S$ be a group of automorphisms acting on $\Gamma$ and let $\{e_l| l \in L\}$, $L$ a suitable index set, be the set of orbit representatives  of the action of $S$ on $\Gamma$. Let $\emptyset \neq I \subseteq L$ such that for all $i \in I$, $S$ acts without fixedpoints on $S(e_i)$. We now let $I=J$, so $G=X_1=\cup_{i \in I}S(e_i)$. So, $K=\emptyset$ and the construction of $\phi$ is very easy. For $\gamma \in \Gamma \setminus G$ we have $\phi(\gamma)=0$ and for $ \gamma \in G=X_1$, $\phi(\gamma)=\gamma$. We are now in a position to construct $N:=M_0(X, \Gamma, \phi, S)$. If $N$ is primitive depends on the subgroups contained in $\Gamma \setminus G$ because in the language of Theorem \ref{EEst1}, $\Gamma \setminus G=\Gamma_0$. In particular, $N$ will be $1$-primitive if $\Gamma \setminus G$ is not the union of cosets of any non-trivial subgroup of $\Gamma$. Then, property (P) of Theorem \ref{EEst1} is obviously fulfilled since the set $C$ only contains the trivial group $\{0\}$. For example one can take $S$ as a group of fixedpointfree automorphisms of a group $\Gamma$ and let $G$ be a union of non-zero orbits of $S$ acting on $\Gamma$ such that the size of $\Gamma \setminus G$ is not divisible by the order of any non-trivial subgroup contained in $\Gamma$. By letting $I=J$, so $G=X_1$ in the language of Proposition \ref{Gabi1} and contructing $\phi$ as in Proposition \ref{Gabi1}, we can  be assured that $M_0(X, \Gamma, \phi, S)$ is a $1$-primitive near-ring. Note that if we have $S$ as a group of fixedpointfree automorphism on $\Gamma$, then we may also choose $X_1 = \Gamma \setminus \{0\}$ and obtain $M_0(X, \Gamma, \phi, S) = M_S(\Gamma)$. 

The following easy to establish proposition shows how big those primitive sandwich centralizer near-rings will be. Their size depends on the number of orbits of $S$ acting on $X_1$. For a set $M$ we let $|M|$ be its cardinality.
\begin{prop}\label{GE1} Let $N:=M_0(X, \Gamma, \phi, S)$ be a finite $1$-primitive sandwich centralizer near-ring with $X, \Gamma, \phi, S$ fulfilling the assumptions of Theorem \ref{EEst1}. Let $k$ be the number of orbits of $S$ when acting on $X_1$. Then, $|N|=|\Gamma|^k$. 
\end{prop}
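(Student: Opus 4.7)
The plan is to count the functions in $N$ by showing each is uniquely and freely determined by its values on a set of orbit representatives.

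First I would pick, once and for all, orbit representatives $e_1, \ldots, e_k$ for the action of $S$ on $X_1$. Since every $f \in N$ satisfies $f(0)=0$ and the centralizing condition $f(s(x)) = s(f(x))$ for all $s \in S$ and $x \in X$, the value of $f$ at an arbitrary point $s(e_i)$ of $X \setminus \{0\} = X_1$ is forced to be $s(f(e_i))$. Hence $f$ is completely determined by the tuple $(f(e_1), \ldots, f(e_k)) \in \Gamma^k$, and the assignment $f \mapsto (f(e_1), \ldots, f(e_k))$ is an injection from $N$ into $\Gamma^k$.

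Next I would show that every tuple in $\Gamma^k$ actually arises. Given arbitrary $\gamma_1, \ldots, \gamma_k \in \Gamma$, define $f(0) := 0$ and, for each $i$ and each $s \in S$, $f(s(e_i)) := s(\gamma_i)$. The key point is well-definedness: if $s_1(e_i) = s_2(e_i)$ for some $s_1, s_2 \in S$, then $s_2^{-1} s_1$ fixes $e_i \in X_1$, so by fixedpointfreeness of $S$ on $X_1$ we get $s_1 = s_2$, and therefore $s_1(\gamma_i) = s_2(\gamma_i)$. Thus $f$ is a well-defined function $X \to \Gamma$, and by construction $f(0) = 0$ and $f(s(x)) = s(f(x))$ for all $x \in X$, $s \in S$, so $f \in N$.

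Combining the two directions yields a bijection $N \longleftrightarrow \Gamma^k$, which gives $|N| = |\Gamma|^k$. The only slightly nontrivial step is the well-definedness in the surjectivity argument, and that is exactly where the fixedpointfree action of $S$ on $X_1$ — supplied by condition (2c) of Theorem \ref{EEst1} — does the work; the rest is bookkeeping.
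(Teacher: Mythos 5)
Your proof is correct and follows essentially the same route as the paper: both arguments fix a set of orbit representatives, observe that the centralizing condition forces each $f \in N$ to be determined by its values there, and use fixedpointfreeness of $S$ on $X_1$ to show that any assignment of values on the representatives extends to a well-defined element of $M_0(X,\Gamma,\phi,S)$. Your write-up is merely more explicit about the bijection with $\Gamma^k$ than the paper's brief sketch.
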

\begin{proof}
Let $\{e_i | i \in I\}$, $I$ a suitable index set, be a complete set of orbit representatives of the orbits of $S$ acting on $X_1$.  Let $k=|I|$. Define a function $f: X \longrightarrow \Gamma$ in the following way: $f(0)=0$, $f(e_i):=\gamma_i$ for $i \in I$, $\gamma_i \in \Gamma$ and $f(s(e_i)):=s(f(e_i))$. Since $S$ acts without fixedpoints on $X_1$ we see that $f$ is well defined and $f \in M_0(X, \Gamma, \phi, S)$. Conversely, any function in $M_0(X, \Gamma, \phi, S)$ is completely determined once one knows its function values on some set of orbit representatives. From that we see that the size of $N$ is $|\Gamma|^{k}$.
\end{proof}

We will use the construction of $\phi$ we obtained in Proposition \ref{Gabi1} to give another example how Theorem \ref{EEst1} can be used to construct $1$-primitive near-rings with $S \neq \{id\}$. We let $(\Gamma,+) := (\Bbb{Z}_{pq},+)$ where $p$ and $q$ are two prime numbers such that $p$ does not divide $q-1$ and $q$ does not divide $p-1$ and $p \neq q$. We take $S=\mathrm{Aut}(\Gamma,+)$. Any group automorphism $s$ of $\Bbb{Z}_{pq}$ is of the form $s: \Gamma \longrightarrow \Gamma, x \mapsto x\cdot a$, where $a \in \Gamma=\Bbb{Z}_{pq}$ is coprime to $pq$ and $\cdot$ is the usual multiplication in $\Bbb{Z}_{pq}$. So, $S$ has $(p-1)(q-1)$ elements, and consequently the orbit $S(1)$ of the number $1 \in \Gamma$ has $(p-1) (q-1)$ elements. Suppose that $s_1, s_2 \in S$, and $s_1(s_2(1))=s_2(1)$. Then, $s_2^{-1}(s_1(s_2(1)))=1$. Since $S$ and $S(1)$ has the same number of elements, $s_2^{-1}\circ s_1 \circ s_2 = id$ and consequently, $s_1 = id$. This means that $S$ acts without fixedpoints on $S(1)$ ($S$ itself is not fixedpointfree on $\Bbb{Z}_{pq}$). We now let, in the notation of Proposition \ref{Gabi1}, $G=X_1:=S(1)$ and $X:=\{0\} \cup X_1$ and define the function $\phi: \Gamma \longrightarrow X$ with $\phi_{|X}=id$ as $\phi(\gamma)=0$ if $\gamma \in \Gamma \setminus G$ and for $\gamma \in G$, $\phi(\gamma)=\gamma$. So, by Proposition \ref{Gabi1} we have for all $\gamma \in \Gamma$ and all $s \in S$, $\phi(s(\gamma))=s(\phi(\gamma))$. Consequently we can build the sandwich centralizer near-ring $M_0(X, \Gamma, \phi, S)$ and it remains to show that property (P) of Theorem \ref{EEst1} is fulfilled. Suppose there is a non-trivial subgroup $I$ of $\Gamma$ such that $\Gamma_0=\Gamma \setminus G$ is a union of cosets of $I$ and therefore also $G$ is a union of cosets of $I$. As a proper subgroup of $\Gamma$, $I$ can only have order $p$ or order $q$. Suppose $I$ has order $p$. Then, $p$ divides the number of elements in $G$ which is $(p-1)(q-1)$. Since $p$ is a prime number, $p$ must divide $q-1$. But this is not the case by choice of the prime numbers $p$ and $q$. The same argument holds if $I$ is assumed to have order $q$. This shows that $\Gamma_0=\Gamma \setminus G$ is not a union of cosets of some non-trivial subgroup of $\Gamma$. Hence, property (P) is fulfilled and $M_0(X,\Gamma, \phi, S)$ is $1$-primitive on $\Gamma$. Note that $\Gamma_0$ contains all the elements of $\Gamma$ which do not have coprime order to $pq$. Thus, any  element in $\Gamma_0$ generates a subgroup of $\Gamma$ which is again contained in $\Gamma_0$. Since $\Gamma_0 \neq \{0\}$, $M_0(X,\Gamma, \phi, S)$ is not $2$-primitive on $\Gamma$ by Theorem \ref{EEst2}. The size of $M_0(X, \Gamma, \phi, S)$ is $pq$ by Corollary \ref{GE1}.

Whenever $X_1$ is just one orbit of $S$ as in the example, then by Corollary \ref{GE1}, $M_0(X,\Gamma, \phi, S)$  has size $\Gamma$. If $S$ is a fixedpointfree automorphism group of $\Gamma$ containing at least two non-identity automorphisms and if $\Gamma$ is finite, then $M_0(X,\Gamma, \phi, S)$  is a so called planar near-ring by Theorem 4.5 of \cite{Wendt2}. Planar near-rings are rich in applications, see \cite{Clay}. Using our main Theorems \ref{EEst1} and \ref{EEst2} and the method of constructing $\phi$ according to Proposition \ref{Gabi1} one could now systematically investigate primitive near-rings acting on special types of groups $\Gamma$. This seems to be an interesting topic for further research but does not lie within the scope of this paper.

\end{document}